\newenvironment{proof}{\noindent {\bf Proof} }{\endprf\par}
\def \endprf{\hfill  {\vrule height6pt width6pt depth0pt}\medskip}
\theoremstyle{plain}
  \newtheorem{theorem}[subsection]{Theorem}
  \newtheorem{lemma}[subsection]{Lemma}
\theoremstyle{remark}
\theoremstyle{definition}
\begin{document}

\title[On Freiman's theorem in Nilpotent groups]{On Freiman's theorem in Nilpotent groups}

\author{David Fisher}
\address{Department of Mathematics, Indiana University, Bloomington}
\email{ fisherdm@@indiana.edu}

\author{Nets Hawk Katz}
\address{Department of Mathematics, Indiana University, Bloomington}
\email{nhkatz@@indiana.edu}

\author{Irine Peng}
\address{Department of Mathematics, Indiana University, Bloomington}
\email{kanamejun@@gmail.com}

\vspace{-0.3in}

\begin{abstract} We generalize a result of Tao which extends Freiman's theorem to the
Heisenberg group. We extend it to simply connected nilpotent Lie groups of arbitrary
step.
\end{abstract}

\maketitle

\section{Introduction}

Given a set $A$ in an abelian group, we say it has small additive doubling if
$$|A+A| \leq K|A|,$$
where $K$ is a constant which is small in an appropriate sense.

The aim of the classical Freiman's theorem is to show that sets
having small additive doubling exhibit a lot of structure.

Terry Tao in \cite{Tao} studied the analogous situation for sets of small multiplicative
tripling in a Heisenberg group. He showed that such a set can be mapped into an abelian
group in such a way that it has additive structure which is consistent with commutation.
Our aim is to obtain such a result in the setting of general simply connected
nilpotent Lie groups. Our only tool will be a rather direct application of the
Baker-Campbell-Hausdorff formula.

We will investigate the structure of a subset $A \subset N$ which is an
{\it approximate multiplicative group}.
We recall an approximate multiplicative group is a set $A$ with the property
$$AA \subset \bigcup_{l=1}^k  x_l A,$$
where the $x_l$'s are elements of $N$ and we refer to $k$ as the multiplicative constant
of $A$. We will restrict to approximate multiplicative groups which are symmetric:
$A=A^{-1}$. The relations
between symmetric approximate multiplicative groups and sets with small tripling are
discussed in \cite{Tao} and \cite{TaoVu}.

Our goal is to prove the following theorem:

\begin{theorem} \label{TheBigOne} Let $A$ be a symmetric approximate multiplicative group
in a simply connected, nilpotent,
$n$-step Lie group $N$.
\begin{enumerate}
\item \label{additive} We have that for any small set of small integers $k_1, k_2, \dots k_l$
that
$$|\log(A^{k_1}) + \log (A^{k_2})+ \dots \log(A^{k_l})| \lesssim |\log (A)|,$$
with the constants depending only on the multiplicative constant of $A$. the step $n$, the maximum of the
$|k_j|$'s and $l$.

\item \label{multiplicative}  Let $B_0=\log(A)$ and $B_j$ be $[B_0,B_{j-1}]$. (By nilpotence this
sequence terminates.) Then for each $j$, there is a number $l$, integers $k_1, \dots, k_l$
and rationals $q_1,\dots q_l$ depending just on $j$ and $n$ so that
$$B_j \subset q_1 \log(A^{k_1}) + q_2 \log(A^{k_2}) + \dots q_l \log(A^{k_l}).$$

\end{enumerate}
\end{theorem}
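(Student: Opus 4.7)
My plan is to proceed by induction on the nilpotency step $n$ of $N$. The base case $n=1$ reduces $N$ to an abelian group, where part (2) is vacuous ($B_1=0$) and part (1) is the Pl\"unnecke--Ruzsa inequality for abelian approximate groups.

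For the inductive step I prove part (2) first, by a descending induction on $j$ from $j=n-1$ down to $j=0$. For any $Y=[\log a_1,[\log a_2,\ldots,[\log a_{j-1},\log a_j]]]\in B_{j-1}$ with $a_i\in A$, form the iterated group commutator $w_Y=[a_1,[a_2,\ldots,[a_{j-1},a_j]]]\in A^{C_j}$. The Baker--Campbell--Hausdorff formula gives
$$\log w_Y = Y + R(\log a_1,\ldots,\log a_j),$$
where $R$ is a polynomial in iterated Lie brackets of total degree strictly greater than $j$. By Jacobi, each monomial of $R$ is a $\mathbb{Q}$-linear combination of left-associated brackets in $B_j\cup B_{j+1}\cup\cdots\cup B_{n-1}$. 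The base case $j=n-1$ is immediate, since such $R$ would lie in $\mathfrak{n}_{n+1}=0$ and hence $B_{n-1}\subset\log(A^{C_{n-1}})$. The descending step uses the inductive hypothesis to rewrite $R$ as a rational combination of $\log(A^k)$'s, and then $Y=\log w_Y-R\in\log(A^{C_j})+\sum_i q_i\log(A^{k_i})$ is the sought expression.

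Part (1) is tackled again through BCH. For any $X_i\in\log(A^{k_i})$,
$$X_1+\cdots+X_l = \log(\exp X_1\cdots\exp X_l) - H(X_1,\ldots,X_l),$$
where $H$ is the higher-order BCH correction polynomial. The leading term lies in $\log(A^{k_1+\cdots+k_l})$, a set of cardinality $\lesssim|A|$ by the approximate-group property. The correction $H$, being a polynomial in iterated brackets of the $X_i$'s, lies by part (2) in a rational combination of $\log(A^k)$'s. To turn this set-theoretic inclusion into the desired size bound, I project to the quotient $\bar N=N/N_n$ of step $n-1$: the outer inductive hypothesis bounds $|\pi L|\lesssim|\bar A|\le|A|$ in $\bar{\mathfrak{n}}=\mathfrak{n}/\mathfrak{n}_n$, and the fibres of the projection lie in the abelian centre $\mathfrak{n}_n$. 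Inside $\mathfrak{n}_n$ the fibres are controlled by part (2) applied to $B_{n-1}\subset\mathfrak{n}_n$ together with Pl\"unnecke--Ruzsa in the abelian group $\mathfrak{n}_n$.

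The main obstacle is the fibre analysis for part (1): a priori the fibres in $\mathfrak{n}_n$ could be as large as $|A|^l$, and one must combine the approximate-group property, the BCH expansion, and part (2) to locate them inside an approximate subset of $\mathfrak{n}_n$ of size comparable to $|A|$. Careful bookkeeping of the integer and rational coefficients arising through the iterated brackets, together with verification that the relevant central sets form genuine approximate groups so that Pl\"unnecke--Ruzsa may be applied in $\mathfrak{n}_n$, is where the bulk of the technical work will concentrate.
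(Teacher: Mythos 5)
Your treatment of part (2) is a valid alternative route. The paper proves (2) by an \emph{upward} induction on $j$: from $B_{j-1}\subset\sum q_i\log(A^{k_i})$ one deduces (via the preparatory Lemma~\ref{lemma:easy}) that $e^{mb}\in A^k$ for all $b\in B_{j-1}$, and then extracts $[a,b]$ from the expansions of $\log(e^{sa}e^{mb}e^{-sa})$ for $s=0,1,\ldots,n-1$ by inverting a Vandermonde matrix. Your descending induction on $j$, expanding $\log$ of the iterated group commutator by BCH and absorbing the higher-degree error $R$ via the hypothesis for larger $j$, is essentially the same computation done in the other direction, and with careful indexing (you drift between $j$ and $j-1$) and attention to the rearrangement of $R$ into left-nested brackets, it goes through.

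Part (1), however, has a genuine gap, and the gap is precisely where the paper invests its main effort. Your argument establishes the set-theoretic containment
\[
\log(A^{k_1})+\cdots+\log(A^{k_l})\ \subseteq\ \log(A^{k'})+\sum_i q_i\log(A^{k_i'}),
\]
but this is a containment in a set of exactly the kind whose cardinality you are trying to bound; it does not by itself improve on the trivial estimate $|A|^{l}$. Your proposed escape---project to $\bar N=N/\exp(\mathfrak n_n)$ and bound fibres---does not close the circle: the fibres of $\pi\colon L\to\pi L$ can be large (already for a subgroup $A$ of the Heisenberg group the fibre is the entire central slice of $A$), and part (2) gives a containment for $B_{n-1}$, not a cardinality bound, so ``Pl\"unnecke--Ruzsa in $\mathfrak n_n$'' has nothing quantitative to start from. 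The paper avoids the fibration entirely by proving the much stronger Lemma~\ref{lemma:easy}: there is an integer $m=m(n)$ and a word $w_{m,n}$ of length $l(m,n)$ such that
\[
m(\log a+\log b)=\log\bigl(w_{m,n}(a,b)\bigr)
\]
for all $a,b$. This converts any finite sum $\log(A^{k_1})+\cdots+\log(A^{k_l})$ into a subset of $\tfrac1{m'}\log(A^{C})$ for bounded $m',C$, so the size bound $\lesssim|A^{C}|\lesssim|A|$ is immediate from the approximate-group property, with no quotient and no fibre counting. That lemma---which rests on Lemma~\ref{useful identity}(\ref{bigproduct}), the statement that $T\sum X_i$ is $\log$ of an explicit bounded word once $T$ is sufficiently divisible---is the missing idea in your proposal; without it, part (1) is not actually proved.
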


Morally, this result says that if $A$ is a symmetric approximate
multiplicative group then $\log(A)$ is very close to being a Lie
algebra. In particular, the first says that the $\log$ of small powers of $A$ all have small additive
doubling and are almost additively parallel. The second part says that the iterated commutators
of $\log(A)$ (which by nilpotence live in progessively smaller subspaces of the Lie algebra)
are also additively compatible with sets from the first part. In particular, a set $A$ having just
these two properties automatically has small mutliplicative tripling by applying the
Baker-Campbell-Hausdorff formula so that we essentially have a characterization of such sets.

 Further, one can apply classical Freiman's theorem to
$\log(A)$ and then use the second condition in our theorem to impose
further restrictions on the resulting generalized arithmetic
progression. It is also fairly easy to adapt our arguments to the
setting of torsion nilpotent groups, at least when all elements have
order that is large enough.  Here large enough is in terms of
various constants arising in the Baker-Campbell-Hausdorff formula.
Combining this with the theorem above and known facts about the
structure of nilpotent groups gives a result for all nilpotent
groups without small torsion.

Our argument uses nilpotency in an essential way. (The
Baker-Campbell-Hausdorff formula is finite!) There is some
speculation among experts that all symmetric multiplicative
subgroups of an arbitrary group $G$ may arise from subsets of
nilpotent groups like those arising in our theorem see e.g. \cite[Chapter 3.2]{Taoblog} and references
there.  The recent preprint by Helfgott can be seen as further evidence for this speculation
\cite{Helfgott}.  As mentioned there,   this problem for general $G$  seems potentially related to Gromov's theorem on groups of polynomial growth and Kleiner's recent effective proof of this
theorem \cite{Gromov, Kleiner}.  The case of linear groups already
seems difficult and interesting and should be independent of analogues of
Gromov's theorem.  Particularly in the linear case, it seems that
ideas from work on uniform exponential growth and uniform independence 
might be relevant \cite{EskinMozesOh,
BreuillardGelander, Breuillard}.

%In light of Gromov's theorem on groups with polynomial growth, one
%might suspect that nilpotency should play a crucial role in the
%theory of sets with small multiplicative tripling in matrix groups.
%Helfgott's results give some evidence in this direction.

{\bf Acknowledgements:} The first author is partially supported
by NSF grant 0643546 and a fellowhsip from the Radcliffe Institute for
Advanced Studies. The second author is partially supported by NSF grant 0653763.

\section{Proofs}

Let $N$ be a simply connected nilpotent Lie group of nilpotency $n$.
  It is known that $\exp$, the exponential map, is a diffeomorphism and
we let $\log: N \rightarrow \frak{n}$
be its inverse.
For any $x, y \in N$, $q \in \mathbb{Q}$, we write
$x^{q}$ to mean $\exp( q \log(x))$, and $c(x,y)$ to mean their commutator $xyx^{-1}y^{-1}$.
For any positive integers $L$, and $j \geq 1$, we
write $\Omega_{j}^{L}$ to mean all the possible maps from $\{1,2,3, \cdots j \}$ to
$\{ 1,2, 3, \cdots L \}$.
If $\alpha \in \Omega_{j}^{L}$, we define $|\alpha|:=j$.  Whenever we write $\prod_{\iota
\in \Omega_{j}^{L}} x_{\iota}$, we mean multiplying from left to right with respect to a
pre-fixed linear order on $\Omega_{j}^{L}$. \\

Recall the Campbell-Baker-Hausdorff formula, which says that for any $x, y \in N$,
\begin{equation} \label{CBH}
\log(x) + \log(y) = \log(xy) +
\sum_{j=2}^{n} \sum_{\alpha \in \Omega_{j}^{2}} t_{\alpha} h_{\alpha}  \end{equation}

\noindent where each $t_{\alpha}$ is a rational number and $h_{\alpha}=
(ad X_{1} ) \circ (ad X_{2} ) \cdots \circ (ad X_{|\alpha|-1}) X_{|\alpha|}$, with
$X_{j}$ equal to
$\log(x)$ if $\alpha(j)=1$ and equal to $\log(y)$ if $\alpha(j)=2$.

\begin{lemma} \label{useful identity}
Take $x_{1}, \cdots x_{L} \in N$, and let $X_{i}=\log(x_{i})$.  For any $\alpha
\in  \Omega_{j}^{L}$, we write $h_{\alpha}$ to mean
$(ad X_{\alpha(1)})  \circ \cdots (ad X_{\alpha(j-1)}) X_{\alpha(j)}$, $H_{\alpha}$ to
mean
$c(x_{\alpha(1)},c(x_{\alpha(2)}, \cdots c(x_{\alpha(j-1)},x_{\alpha(j)})))$.  Then
\begin{enumerate}
\item \label{iteratedCBH} \[ \log(x_{1}.x_{2} \cdots x_{L}) = \sum_{i=1}^{L} X_{i} +
\sum_{j=2}^{n} \sum_{\alpha \in \Omega_{j}^{L}} c_{\alpha} h_{\alpha}, \mbox{    }
c_{\alpha} \in \mathbb{Q} \]

\item \label{commutators} For any $j$ and $\alpha \in \Omega_{j}^{L}$, (remember $L$ is fixed)
\[ \log(H_{\alpha}) = h_{\alpha} + \sum_{\ell=j+1}^{n} \sum_{\beta \in \Omega_{\ell}^{L}}
s_{\beta} h_{\beta}, \mbox{      } s_{\beta} \in \mathbb{Q} \]

\item  \label{logproduct} For any $1 \leq J \leq n$,
There are rational numbers $\beta_{2},\beta_{3}, \cdots \beta_{n}$
depending only on $J$ and $L$ so that,
\[ \sum_{i=1}^{L} X_{i} = \log(x_{1}x_{2}\cdots x_{L}) + \sum_{i=2}^{J} \beta_{i}
\log(\tilde{M}_{i}) + \sum_{i=J+1}^{n} \sum_{\alpha \in \Omega_{i}^{L}} c''_{\alpha}
h_{\alpha} \]
\noindent where $\tilde{M}_{i}=\prod_{\alpha \in \Omega_{i}^{L}}
\left( H_{\alpha} \right)^{m_{\alpha}}$ for
integers $m_{\alpha}$.

\item \label{bigproduct} For any $1 \leq j \leq n$, There are integers
$c_{1},c_{2} \cdots c_{n}$ (depending only on $j$ and $L$) such that whenever
an integer $T$ is divisible by each $c_{j}$
\[ \sum_{\iota} T X_{\iota} = \log( \prod_{i=1}^{L} x_{i}^{T}  \prod_{\ell=2}^{j}
\prod_{\alpha \in
\Omega_{\ell}^{L}} \tilde{H}_{\alpha}) + \sum_{\ell=j+1}^{n} \sum_{\alpha
\in \Omega_{\ell}^{L}} s'_{\alpha} h_{\alpha},
 \mbox{            } s'_{\alpha} \in \mathbb{Q} \]

\noindent where each $\tilde{H}_{\alpha}=c(x^{m_{\alpha(1)}}_{\alpha(1)},
c(x^{m_{\alpha(2)}}_{\alpha(2)}, \cdots ,
c(x^{m_{\alpha(j-1)}}_{\alpha(j-1)},x^{m_{\alpha(j)}}_{\alpha(j)})))$ for some
integers $m^{\alpha(i)}$ (depending on $T$). \end{enumerate} \end{lemma}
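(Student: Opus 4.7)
The four parts should be proved in order, each invoking the previous. The unifying principle is that the Baker--Campbell--Hausdorff formula (\ref{CBH}) allows each bracket monomial $h_\alpha$ of degree $j$ to be traded for a logarithm $\log H_\alpha$ of an iterated group commutator, with the error being a sum of bracket monomials of degree strictly greater than $j$. Because $N$ is $n$-step nilpotent, all bracket monomials of degree exceeding $n$ vanish, so these expansions terminate cleanly and every identity reduces to a finite combinatorial bookkeeping problem.

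For part (\ref{iteratedCBH}), I would induct on $L$: the case $L=2$ is exactly (\ref{CBH}), and for the inductive step I apply (\ref{CBH}) to the pair $(x_1\cdots x_{L-1}, x_L)$, substitute the inductive expression for $\log(x_1\cdots x_{L-1})$, and expand each resulting bracket monomial in $\log(x_1\cdots x_{L-1})$ and $X_L$ by multilinearity of $\mathrm{ad}$ into a sum of bracket monomials in $X_1,\dots,X_L$ indexed by $\bigcup_{j\le n}\Omega_j^L$. Part (\ref{commutators}) is likewise an induction on $j$: the base case $j=2$ comes from (\ref{CBH}) applied to $H_\alpha=c(x_{\alpha(1)},x_{\alpha(2)})$, where the linear parts cancel and $[X_{\alpha(1)},X_{\alpha(2)}]$ appears as the leading bracket. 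For the inductive step, write $H_\alpha=c(x_{\alpha(1)},H_{\alpha'})$ where $\alpha'$ is the natural restriction of $\alpha$, apply (\ref{CBH}) to this commutator, substitute the inductive hypothesis for $\log H_{\alpha'}$, and verify that every new term has degree strictly greater than $j$.

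For part (\ref{logproduct}), induct on $J$. The case $J=1$ is just part (\ref{iteratedCBH}) rearranged to isolate $\sum_i X_i$. For the inductive step $J-1\to J$, isolate the degree-$J$ block $\sum_{\alpha\in\Omega_J^L} c_\alpha h_\alpha$ in the previous formula, use part (\ref{commutators}) to replace each $h_\alpha$ by $\log H_\alpha$ modulo errors of degree $>J$, and combine $\sum_\alpha m_\alpha\log H_\alpha$ into a single $\log\bigl(\prod_\alpha H_\alpha^{m_\alpha}\bigr)=\log\tilde M_J$ using part (\ref{iteratedCBH}) with the $H_\alpha$'s playing the role of the $x_i$'s. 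The bracket corrections produced in this last step are brackets of $\log H_\alpha$'s, hence of degree at least $2J>J$, and so they fit comfortably into the tail $\sum_{i=J+1}^n$. The rational coefficient $\beta_J$ out front absorbs the common denominator chosen to make the exponents $m_\alpha$ integer.

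Part (\ref{bigproduct}) amounts to clearing the rational scalings in part (\ref{logproduct}) by absorbing them into powers of the arguments of the commutators. The key observation is that applying part (\ref{commutators}) with $x_i^{m_i}$ in place of $x_i$ yields $\log\tilde H_\alpha=\bigl(\prod_i m_{\alpha(i)}\bigr) h_\alpha + (\text{degree}>j)$, so any prescribed integer coefficient on $h_\alpha$ can be realized by choosing the $m_{\alpha(i)}$'s appropriately. Taking $T$ to be a common multiple of all denominators and exponents that arise, I would replace each $T\beta_i\log\tilde M_i$ in the scaled version of part (\ref{logproduct}) by a sum of logarithms of powered commutators $\tilde H_\alpha$, and then assemble everything, together with the leading $T\sum_i X_i=\sum_i\log(x_i^T)$, into a single logarithm of $\prod_i x_i^T\prod_\alpha\tilde H_\alpha$ by a final application of part (\ref{iteratedCBH}). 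The main obstacle throughout is disciplined tracking of the degree filtration: at each manipulation the errors introduced must be of strictly higher degree than the terms being replaced, so that they never collide with earlier terms and can always be absorbed into the final tail. It is precisely the finiteness forced by $n$-step nilpotency that makes this inductive scheme terminate.
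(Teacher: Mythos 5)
Your treatment of parts~(\ref{iteratedCBH}), (\ref{commutators}), and (\ref{logproduct}) matches the paper's proof in both structure and substance: the same inductions (on $L$, on $|\alpha|$, on $J$ respectively), the same trade of a degree-$j$ bracket monomial $h_\alpha$ for $\log H_\alpha$ modulo degree $>j$ errors, and the same clearing of denominators via a single rational $\beta_J$ in part~(\ref{logproduct}). These are fine.

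For part~(\ref{bigproduct}), however, you depart from the paper and the departure creates a genuine gap. The paper proves~(\ref{bigproduct}) by induction on $j$, and explicitly remarks immediately after the lemma that the parameter $j$ was introduced precisely to make this induction possible. You instead propose a one-shot argument: scale part~(\ref{logproduct}) by $T$, convert $T\sum_i X_i = \sum_i \log(x_i^T)$ and each $T\beta_i\log\tilde M_i$ into logarithms of powered commutators, and then ``assemble everything into a single logarithm $\log\bigl(\prod_i x_i^T\prod_\alpha\tilde H_\alpha\bigr)$ by a final application of part~(\ref{iteratedCBH}).'' That final assembly is where the argument breaks down. Part~(\ref{iteratedCBH}) says $\log\bigl(\prod_i x_i^T\prod_\alpha\tilde H_\alpha\bigr) = \sum_i\log(x_i^T) + \sum_\alpha\log\tilde H_\alpha + (\text{brackets of these logs})$, and those brackets are \emph{not} all of high degree: for instance $\log(x_1^T)=TX_1$ and $\log(x_2^T)=TX_2$ bracket to produce a degree-$2$ term $\tfrac12 T^2[X_1,X_2]$, with a $T$-dependent coefficient. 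For $j=n$ (the case of interest, with empty tail) there is nowhere to put such a term, and for general $j$ these errors land at every degree from $2$ to $n$, not just in the tail $\ell>j$. Your closing sentence claims that at each step ``the errors introduced must be of strictly higher degree than the terms being replaced,'' but this very assembly step violates that discipline. The paper's induction on $j$ is exactly the mechanism that controls these errors: at the stage going from $j=J$ to $j=J+1$, the hypothesis has already packaged all brackets of degree $\le J$ into the group product, so the only new errors to absorb are those in the single degree block $\ell=J+1$, which are converted into $\log\tilde H_\alpha$'s for $|\alpha|=J+1$ (after clearing denominators by passing from $c_J$ to $c_{J+1}$) and fed back into the product. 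You should restructure your part~(\ref{bigproduct}) argument along these inductive lines rather than attempting a single global reassembly.

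A smaller inaccuracy: after scaling part~(\ref{logproduct}) by $T$ you would also face $T\log(\prod_i x_i) = \log\bigl((\prod_i x_i)^T\bigr)$, which is not the same group element as $\prod_i x_i^T$; converting one to the other again produces low-degree, $T$-dependent bracket errors that must be tracked. This is another reason the one-shot route is not viable as written, and why the paper's base case for~(\ref{bigproduct}) instead starts from $\sum_i TX_i = \sum_i\log(x_i^T)$ and applies part~(\ref{iteratedCBH}) to the $x_i^T$'s directly, taking the resulting brackets as the initial tail.
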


In particular, in the case of (\ref{bigproduct}), we are interested in the case $j=n$.
We have stated the result with the parameter $j$ so that it is easy to write down a
proof which is an induction.

\begin{proof}
The proof of (\ref{iteratedCBH}) is a repeated application of Campbell-Baker-Hausdorff.
Suppose the claim
is true up to $J$, then
\[ \log(x_{1}x_{2} \cdots x_{J}) = \sum_{i=1}^{J} X_{i} + \sum_{\ell=2}^{n}
\sum_{\alpha \in
\Omega_{\ell}^{J}} c_{\alpha} h_{\alpha} \]

Applying (\ref{CBH}) to $x_{1}x_{2}\cdots x_{J}$ and $x_{J+1}$ gives us
\[ \log(x_{1}x_{2} \cdots x_{J}.x_{J+1}) = \log(x_{1}x_{2}\cdots x_{J}) + \log(x_{J+1}) +
\sum_{j=2}^{n} \sum_{\alpha \in \Omega_{j}^{2}} t_{\alpha} h_{\alpha}  \]

\noindent where each $h_{\alpha}=(ad X_{1} ) \circ (ad X_{2} ) \cdots \circ (ad X_{|\alpha|-1}) X_{|\alpha|}$ with $X_{j}$ equal either
$\log(x_{J+1})$ or $\log(x_{1}x_{2}\cdots x_{J})$.  Substituting the latter by the
inductive hypothesis yields \ref{iteratedCBH} for $J+1$. \\

We now prove (\ref{commutators}) by induction on $|\alpha|$.
The base step (when $|\alpha|=2$) is obtained by combining (\ref{CBH}) with the following
\[ \log(e^{a}e^{b}e^{-a})= b+ \sum_{j=1}^{n-1} \frac{1}{j!} (ad(a))^{j}(b) \]

Suppose now that (\ref{commutators}) is true up to $J$ many arguments.  Then,
\begin{eqnarray*}
\log(c(x_{1}, \underbrace{ c(x_{2}, \cdots ,c(x_{J-1},c(x_{J},x_{J+1}))) } ))
&=& [X_{1}, \log(c(x_{2}, \cdots ,c(x_{J-1},c(x_{J},x_{J+1}))))] \\
&& + \sum_{\ell=2}^{n} \sum_{\alpha \in \Omega_{\ell}^{2}} t_{\alpha} h_{\alpha} \\
&=& [X_{1},[X_{2}, [\cdots, [X_{J},X_{J+1}]]]] + \sum_{\ell=J+2}^{n} \sum_{\alpha \in \Omega_{\ell}^{J+1}} s_{\alpha} h_{\alpha}  \end{eqnarray*}

\noindent where the first line comes from applying the base step to $x_{1}$ and
the underlined expression, and the second line comes from applying the inductive assumption.\\

Equation (\ref{logproduct}) can be proved similarly by inducting on $J$.  The case $J=1$
is just (\ref{iteratedCBH}).

Suppose it is true
for $J$.  Then
\[ \sum_{i=1}^{L} X_{i} = \log(x_{1}x_{2}\cdots x_{L}) + \sum_{i=2}^{J} \beta_{i}
\log(\tilde{M}_{i}) + \sum_{\alpha \in \Omega_{J+1}^{L}} t_{\alpha} h_{\alpha} +
\sum_{i=J+2}^{n} \sum_{\alpha \in \Omega_{i}^{L}} c''_{\alpha} h_{\alpha} \]

Let $\beta_{J+1}$ be 1 over the smallest common multiple of the denominators of
$t_{\alpha}$'s, where $\alpha$ ranges over $\Omega_{J+1}^{L}$.  Then the second summand above can be replaced by
\begin{eqnarray*}
\sum_{\alpha \in \Omega_{J+1}^{L}} t_{\alpha} h_{\alpha} &=& \beta_{J+1} \sum_{\alpha
\in \Omega_{J+1}^{L}} m_{\alpha}h_{\alpha}  \mbox{,            } m_{\alpha} \in \mathbb{Z} \\
&=& \beta_{J+1} \sum_{\alpha \in \Omega_{J+1}^{L}} \log(H_{\alpha}^{m_{\alpha}} )
+ \sum_{i=J+2}^{n} \sum_{\alpha \in \Omega_{i}^{L}} t'_{\alpha}h_{\alpha}  \\
&=& \beta_{J+1} \log ( \prod_{\alpha \in \Omega_{J+1}^{L}} H_{\alpha}^{m_{\alpha}} )
+ \sum_{j=J+2}^{n} \sum_{\alpha \in \Omega_{j}^{L}} t''_{\alpha} h_{\alpha} \end{eqnarray*}

\noindent where the second line comes from applying (\ref{commutators}) to each term
in the sum on the
right hand side of first line, and the third line comes from applying (\ref{iteratedCBH})
 followed by
(\ref{commutators}) to the first sum on
the second line. \\

We now prove (\ref{bigproduct}) by induction on $j$.  The base step is just (\ref{CBH}).
Suppose it is true
up to $J$.  If we let $c_{J+1}$ to be the lowest common multiple between $c_{J}$
and denominators of $s'_{\alpha}$'s with $|\alpha|=J+1$, then using $(3)$ of the lemma, we have:
\begin{equation}\label{inductive step}
\sum_{i=1}^{L} c_{J+1}X_{i} = \log(\prod_{i=1}^{L} x_{i}^{c_{J+1}} \prod_{j=2}^{J}
\prod_{\beta \in \Omega_{j}^{L}} \tilde{H}_{\beta}) + \sum_{\alpha \in \Omega_{J+1}^{L}} \tilde{h}_{\alpha}
+  \sum_{\ell=J+2}^{n} \sum_{\beta \in \Omega_{\ell}^{L}} s'_{\beta} h_{\beta} \end{equation}
\noindent where $\tilde{h}_{\alpha}=ad (m_{\alpha(1)}X_{\alpha(1)} ) \circ ad (m_{\alpha(1)}X_{\alpha(1)} )
\cdots ad (m_{\alpha(J)}X_{\alpha(J)}) (m_{\alpha(J+1)}X_{\alpha(J+1)}) $ for integers
$m_{\alpha}$'s.

Applying (\ref{commutators}) to the second term on the right hand side, we have
\[ \sum_{i=1}^{L} c_{J+1}X_{i} = \log(\prod_{i=1}^{L} x_{i}^{c_{J+1}} \prod_{j=2}^{J}
\prod_{\beta \in \Omega_{j}^{L}} \tilde{H}_{\beta}) + \sum_{\alpha \in \Omega_{J+1}^{L}} \log(\tilde{H}_{\alpha})
+  \sum_{\ell=J+2}^{n} \sum_{\beta \in \Omega_{\ell}^{L}} s''_{\beta} h_{\beta} \]

\noindent On the right hand side, we can combine the first term and each summand in the second term one at a time by applying (\ref{CBH})
and using (\ref{iteratedCBH}), (\ref{commutators}) to get the desired equality.
\end{proof}

We proceed towards the proof of the main theorem but we first need the
following Lemma.

\begin{lemma}
\label{lemma:easy}
For all $n$ there exists an integer $m$ depending only on $n$ so that if
$a,b \in G$ with $G$ a simply
connected $n$-step nilpotent Lie group then
$$m (\log a + \log b) = \log (w_{m,n}(a,b)),$$
with $w_{m,n}(a,b)$ a word in $a,b,a^{-1},b^{-1}$ with length $l(m,n)$ depending
only on $m,n$.
\end{lemma}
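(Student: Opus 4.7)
The plan is to derive this lemma as an essentially immediate corollary of part (\ref{bigproduct}) of Lemma \ref{useful identity}, specialized to the case $L=2$ and $j=n$.

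First I would set $x_1 = a$ and $x_2 = b$, so that $X_1 = \log a$ and $X_2 = \log b$. Applying part (\ref{bigproduct}) with these choices, with $L=2$, and with $j=n$ (the step of nilpotency), I obtain integers $c_1, c_2, \ldots, c_n$ depending only on $n$ (since $L=2$ is fixed) such that, whenever $T$ is divisible by each $c_i$,
\[
T(\log a + \log b) = \log\Bigl( a^T b^T \prod_{\ell=2}^{n} \prod_{\alpha \in \Omega_\ell^2} \tilde{H}_\alpha \Bigr) + \sum_{\ell=n+1}^{n} \sum_{\alpha \in \Omega_\ell^2} s'_\alpha h_\alpha.
\]
The trailing sum is empty (the index set is vacuous), and moreover in an $n$-step nilpotent Lie algebra every bracket of length greater than $n$ vanishes, so any such leftover would be zero in any case. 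Thus I can take $m$ to be the least common multiple of $c_1, \ldots, c_n$, a quantity depending only on $n$.

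Next I would verify that the right-hand side is the logarithm of a word in $a, b, a^{-1}, b^{-1}$ of length bounded in terms of $m$ and $n$ alone. Each $\tilde{H}_\alpha$ is by definition an iterated group-commutator of elements of the form $x_{\alpha(i)}^{m_{\alpha(i)}}$, where the $m_{\alpha(i)}$ are integers determined by $m$ and $n$; expanding each such commutator $c(u,v) = uvu^{-1}v^{-1}$ recursively, and writing each $x^k$ as $x \cdot x \cdots x$ (or $x^{-1} \cdots x^{-1}$ for $k < 0$), produces a word in $a, b, a^{-1}, b^{-1}$ whose length is an explicit function of $m$ and $n$. The outer product over $\ell$ and $\alpha \in \Omega_\ell^2$ involves a number of factors bounded by $\sum_{\ell=2}^{n} 2^\ell$, again depending only on $n$. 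Concatenating all these pieces with $a^m b^m$ yields the desired word $w_{m,n}(a,b)$ with length $l(m,n)$ depending only on $m$ and $n$.

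I do not expect any real obstacle in this argument; all of the work has already been absorbed into Lemma \ref{useful identity}. The only mild bookkeeping task is to confirm that the integers $m_{\alpha(i)}$ and the number of commutator factors appearing in $\prod_{\ell,\alpha} \tilde{H}_\alpha$ are bounded purely in terms of $m$ and $n$, which follows directly from the inductive construction in the proof of (\ref{bigproduct}) since the only parameters that enter there are $L=2$, $j=n$, and the chosen multiple $T=m$.
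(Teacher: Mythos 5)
Your proposal is correct and follows exactly the route the paper takes: the paper's own proof simply states that the lemma is an immediate consequence of part (\ref{bigproduct}) of Lemma \ref{useful identity}, and you have merely spelled out the specialization to $L=2$, $j=n$, with $m$ the least common multiple of the $c_i$ and the bookkeeping that the commutator product yields a word of bounded length. No divergence from the paper's argument.
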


\begin{proof} This is an immediate consequence of Lemma \ref{useful identity} part
(\ref{bigproduct}).
\end{proof}

Now we are ready to prove the main theorem.

\begin{proof}

Our first goal is to show that
$$|\log(A^{k_1}) + \log (A^{k_2})+ \dots \log(A^{k_l})| \lesssim |\log (A)|,$$
where the constants depend only on the step $n$, the maximum of the
$|k_j|$'s, $l$  and the multiplicative constant of $A$.
This follows directly from Lemma \ref{lemma:easy}. Applying that lemma repeatedly, we find
that every element of $\log(A^{k_1}) + \log (A^{k_2})+ \dots \log(A^{k_l})$ is contained in
${1 \over m} \log(A^l(m,n))$.
Because $A$ is an approximate multiplicative subgroup we have that
$$|A^l(m,n)| \lesssim |A|,$$
and we have proved the first part.

Next we must show that
$$B_j \subset q_1 \log(A^{k_1}) + q_2 \log(A^{k_2}) + \dots q_l \log(A^{k_l}).$$

We prove this by induction. Clearly it is true for $B_0$. Let us suppose that it is true for
$B_{j-1}$. Then applying lemma \ref{lemma:easy} to the induction hypothesis repeatedly,
 we find $m$ and $k$ depending only on $j$
and $n$ so that for any $b \in B_{j-1}$ we have
$$e^{mb} \in A^k.$$

We recall again the identity
$$\log(e^a e^{mb} e^{-a}) = mb + \sum_{j=1}^{n-1} {1 \over j!} ad(a)^j mb.$$
For any $1 \leq s \leq n-1$ with $s$ an integer, this implies that
$$\log(e^{sa} e^{mb} e^{-sa})=mb + \sum_{j=1}^{n-1} {s^j \over j!} ad(a)^j mb.$$
Viewing this as a system of $n$ linear equations for the unknowns $ad(a)^j b$, we can solve
by inverting the Vandermonde matrix, and we find
finding rationals $q_1 ,\dots q_n$ depending only on $n$ and $j$ so that if $a \in \log(A),$
and $b \in B_{j-1}$ then
we have
$$[a,b] \in q_1 \log (A^{k+2}) + q_2 \log (A^{k+4}) + \dots + q_n \log (A^{k+2n}).$$
Thus we are done.

\end{proof}


\begin{thebibliography}{Palais}

\bibitem{Breuillard}
Breuillard, Emmanuel On uniform exponential growth for solvable groups.  {\em Pure Appl. Math. Q.}  3  (2007),  no. 4, part 1, 949--967.

\bibitem{BreuillardGelander}
Breuillard, E.; Gelander, T. Uniform independence in linear groups.  {\em Invent. Math.}  173  (2008),  no. 2, 225--263. 

\bibitem{EskinMozesOh}
Eskin, Alex; Mozes, Shahar; Oh, Hee On uniform exponential growth for linear groups.  {\em Invent. Math.}  160  (2005),  no. 1, 1--30.

\bibitem{Gromov}
Gromov, Mikhael. Groups of polynomial growth and expanding maps.
{\em Inst. Hautes \'{E}tudes Sci. Publ. Math.} No. 53 (1981), 53--73.

\bibitem{Helfgott}
Helfgott, H. A. Growth in $SL_3(Z/pZ)$, preprint.

\bibitem{Kleiner}
B. Kleiner A new proof of Gromov's theorem on groups of polynomial
growth, preprint.

\bibitem{TaoVu}
Tao, Terence; Vu, Van {\em Additive combinatorics.} Cambridge Studies in Advanced Mathematics, 105. Cambridge University Press,
Cambridge, 2006.

\bibitem{Tao}
Tao, Terence, Product set estimates for non-commutative groups, {\em Combinatorica}, available online, to appear.

\bibitem{Taoblog}
Tao, Terence, {\em Structure and Randomness: Pages from Year One of
a Mathematical Blog}, American Mathematical Society, 2008.

\end{thebibliography}
\end{document}